\numberwithin{equation}{section}
\newcounter{thmcounter}
\newcounter{Remarkcounter}
\numberwithin{thmcounter}{section}
\newtheorem{Prop}[thmcounter]{Proposition}
\newtheorem{Corol}[thmcounter]{Corollary}
\newtheorem{theorem}[thmcounter]{Theorem}
\newtheorem{Lemma}[thmcounter]{Lemma}
\theoremstyle{remark}
\newtheorem{Remark}[Remarkcounter]{Remark}
\begin{document}
\title{On the quasi-minimal surfaces in the 4-dimensional de Sitter space with 1-type Gauss map}


\author{Nurettin Cenk Turgay \footnote{
Address: Istanbul Technical University, Faculty of Science and Letters,
Department of  Mathematics, 34469 Maslak, Istanbul, Turkey} \footnote{e-mail:turgayn@itu.edu.tr}}

\maketitle
\begin{abstract}                                                            
In this paper, we study the Gauss map of the surfaces in the de Sitter space-time $\mathbb S^4_1(1)$. First, we prove that a space-like surface lying in the  de Sitter space-time has pointwise 1-type Gauss map if and only if it has parallel mean curvature vector. Then, we obtain the complete classification of the quasi-minimal  surfaces with  1-type Gauss map. 

\noindent{\it Mathematics Subject Classification: }53B25, 53C50

\noindent{\it Keywords: }quasi-minimal surface,  finite type Gauss map, de Sitter space-time
\end{abstract}

\section{Introduction}
The notion of finite type mappings have been extensively studied by several geometers after it was introduced by B. Y. Chen in late 1970's. Many results in this topic have been published so far, \cite{ChenRapor}. Even now, there are several open problems on this subject which are currently being dealt with.

Let $\mathbb E^m_s$ denote the semi-Euclidean space with dimension $m$ and index $s$ whose metric tensor is given by 
$$\tilde g=\langle\ ,\ \rangle=-\sum\limits_{i=1}^sx_i^2+\sum\limits_{j=s+1}^mx_j^2$$ 
and $M$ be an oriented $n$-dimensional semi-Euclidean submanifold of $\mathbb E^m_s$. Consider  a smooth mapping $\phi$ defined on $M$ into another semi-Euclidean space  $\mathbb E^N_S$. $\phi$ is said to be $k$-type if it can be expressed as a sum of
\begin{equation}\label{FiniteTypeGaussDef}
 \phi=\phi_0+\phi_1+\phi_2+\hdots+\phi_k,
\end{equation}
where $\phi_0$ is a constant vector and $\phi_i$ is a non-constant eigenvector of $\Delta$ corresponding to eigenvalue $\lambda_i$ for $i=1,2,\hdots,k$ with  $\lambda_1<\lambda_2<\hdots< \lambda_k$ and $\Delta$  is the  Laplace operator of $M$ with respect to the induced metric of $M$, \cite{Chen-Morvan-Nore}. Note that if the position vector of $M$ is $k$-type, then $M$ is said to be of $k$-type, \cite{ChenKitap,ChenMakale1986}.
                          
In particular, if the mapping $\phi$ is the Gauss map of $M$, then $M$ is said to have $k$-type  Gauss map, \cite{Chen-Piccinni}.  From these definitions, one can immediately see that an oriented submanifold $M$ of  the semi-Euclidean space $\mathbb E^m_s$ has 1-type Gauss map if and only if its Gauss map $\nu$ satisfies 
\begin{equation}\label{Glbl1TypeDefinition}
 \Delta \nu  = \lambda (\nu+C)
\end{equation}
for a constant $\lambda$ and  a constant vector $C$. On the other hand, if $\nu$ satisfies 
\begin{equation}\label{PW1TypeDefinition}
 \Delta \nu =f(\nu +C)
\end{equation}
for a smooth non-constant function $f$ and a constant vector $C$, then $M$ is said to have proper pointwise 1-type Gauss map. 

A surface $M$ in a 4-dimensional Lorentzian manifold is said to be quasi-minimal if its mean curvature vector is light-like on every point of $M$. In this work, we study the quasi-minimal surfaces of the de Sitter space-time $\mathbb S^4_1(1)$ in terms of the type of their Gauss map. In the Section 2, after we describe the general notion that we use, we give a brief summary of the basic facts and definitions. In the section 3, we obtain the complete classification of  quasi-minimal surfaces of de Sitter space-time with 1-type Gauss map. We also give a characterization of these type of surfaces with proper pointwise 1-type Gauss map. 

\section{Prelimineries}
In this section, we give the basic definitions and facts, \cite{ONeillKitap}. We also mention about the notations which   we will use in this paper, which are along the lines used in \cite{NCTGenRelGrav}.

We put 
\begin{eqnarray} 
\mathbb S^{m-1}_s(r^2,c_0)&=&\{x\in\mathbb E^m_s: \langle x-c_0, x-c_0 \rangle=r^{-2}\},\notag
\\  
\mathbb H^{m-1}_{s-1}(-r^2,c_0)&=&\{x\in\mathbb E^m_s: \langle x-c_0, x-c_0\rangle=-r^{-2}\},\notag
\end{eqnarray}
where $\langle\ ,\ \rangle$ is the indefinite inner product of $\mathbb E^m_s$. In general relativity, $\mathbb E^4_1$, $\mathbb S^{4}_1 (r^2)=\mathbb S^{4}_1 (r^2,0)$ and  $\mathbb H^{4}_1 (r^2)=\mathbb H^{4}_1 (r^2,0)$
are known as the Minkowski, de Sitter and anti-de Sitter space-times, respectively, \cite{ChenVeken2009Houston}.

Now, consider a space-like surface $M$ of  de Sitter space-time  $\mathbb S^{4}_1 (1)$.  We put $\nabla$ and $\widetilde\nabla$ for the Levi Civita connections of  $M$ and $\mathbb S^{4}_1 (1)$ respectively. Then, the Gauss and Weingarten formulas of $M$ become
\begin{eqnarray}
\label{MEtomGauss} \widetilde\nabla_X Y&=& \nabla_X Y + h(X,Y),\\
\label{MEtomWeingarten} \widetilde\nabla_X \xi&=& -A_\xi X+D_X \xi
\end{eqnarray}
for any tangent vector field $X,\ Y$ and normal vector field $\xi$,  where  $h$ and $D$   are the second fundemental form and the normal connection of $M$ in  $\mathbb S^{4}_1 (1)$, respectively and $A$ is the shape operator of $M$. We denote the curvature tensor associated with the connections $\nabla$ and $D$ by $R$ and $R^D$, respectively. 

The   Codazzi  equation is given by
\begin{equation}
\label{MinkCodazzi} D_X h(Y,Z)-h(\nabla_X Y,Z)-h(Y,\nabla_X Z)=D_Y h(X,Z)-h(\nabla_Y X,Z)-h(X,\nabla_Y Z).
\end{equation}
The mean curvature vector $H$ of  $M$ in  $\mathbb S^4_1(1)$ is defined by $H=\frac 12 \mathrm{tr}h$. If $H$ is light-like on $M$, then $M$ is said to be a quasi-minimal surface in $\mathbb S^4_1(1)$.

\subsection{Gauss map}\label{SubSectMinkGaussMap}
Let  $\Lambda^{n}(\mathbb E^m_s)$ and $\tilde G(n, m)$ denote the space of  $n$-vectors on $\mathbb E^m_s$ and the Grassmannian manifold consisting of all oriented $n$-planes through the origin of $\mathbb E^m_s$, respectively.  Note that $\tilde G(n, m)$  is canonically imbedded in $\Lambda^{n}(\mathbb E^m_s)$ which is an $N$ dimensional vector field, where $N= {m\choose {n}}$. A non-degenerate inner product on $\Lambda^{n}(\mathbb E^m_s)$ is defined by
$$\langle X_1\wedge X_2\wedge\cdots\wedge X_{n}, Y_1\wedge Y_2\wedge\cdots\wedge Y_{n}\rangle= \det(\langle X_i,Y_j\rangle),$$
where $X_i,\;Y_i\in\mathbb E^m_{s},\; i=1,2,\hdots,n$ and $\langle X_i,Y_j\rangle$ denotes the semi-Euclidean inner product of the vectors $X_i$ and $Y_j$. We will denote the inner product space $\Big(\Lambda^{n}(\mathbb E^m_s),\langle,\rangle\Big)$ by $\Lambda^{m,n}_S$, where $S$ is its index. There exists a one-to-one, onto and linear isometry from $\Lambda^{m,n}_S$ into $\mathbb E^N_S$, because their dimension and index are equal(see \cite[p. 52]{ONeillKitap}). Hence, we have $\tilde G(n, m)\subset\Lambda^{m,n}_S\cong\mathbb E^N_S$.  

Let $M$ be an $n$-dimensional, oriented space-like submanifold of the semi-Euclidean space  $\mathbb E^m_s$. Consider a local orthonormal base field $\{e_1, e_2,\hdots,e_{n}\}$ of the tangent bundle of $M$. Then, the Laplace operator of $M$ with respect to the induced metric is 
\begin{equation}\label{SemiEuclSpacSubmflDelta}
\Delta=\sum\limits^n_{i=1}(-e_ie_i+\nabla_{e_i}e_i).
\end{equation}
The smooth mapping
\begin{equation}\label{MinkGaussTasvTanim}
\begin{array}{rcl}\nu:M&\rightarrow&\tilde G(n, m)\subset R^{N-1}_S (1)\subset \mathbb E^N_S\cong\Lambda^{m,n}_S\\
p&\mapsto&\nu(p)=(e_{1}\wedge e_{2}\wedge\hdots\wedge e_n)(p)\end{array}
\end{equation}
is called the (tangent) Gauss map of $M$  which assigns a point $p$ in $M$   to the representation of the oriented 
$n$-plane through  the origin of $\mathbb E^m_s$ and parallel to the tangent  space of $M$ at $p$.

$M$ is said to have 1-type Gauss map if \eqref{Glbl1TypeDefinition} is satisfied for a constant $\lambda$ and a constant vector $C\in \mathbb E^6_3$. Moreover,  $M$ is said to have pointwise 1-type Gauss map if \eqref{PW1TypeDefinition} is satisfied for a smooth  function $f$ and a constant vector $C\in \mathbb E^6_3$, \cite{Chen-Piccinni,KKKM}. A pointwise 1-type Gauss map is called proper if \eqref{PW1TypeDefinition} is satisfied for a non-constant  function $f$.

\section{Surfaces with pointwise 1-type Gauss map}
Let $M$ be a space-like surface in $\mathbb E^m_s$ and $\nu$ its Gauss map. Consider an orthonormal frame field $\{e_1,e_2;e_3,e_4,\hdots,e_m\}$. From \cite[Lemma 3.2]{KKKM}, one can obtain that $\nu$ satisfies 
\begin{equation}\label{Pre121Mink4GaussLaplMargTrapped}
\Delta\nu=\|\hat h\|^2\nu+\sum\limits_{3\leq\alpha\leq\beta\leq m}\varepsilon_\alpha\varepsilon_\beta\langle R^{\hat D}(e_1,e_2)e_\alpha,e_\beta\rangle e_\alpha\wedge e_\beta-2\hat D_{e_1}\hat H\wedge e_2-2e_1\wedge \hat D_{e_2}\hat H,
\end{equation}
where  $\hat D$, $\hat h$ and $\hat H$ denote normal connection, second fundemental form and mean curvature vector of $M$ in $\mathbb E^m_s$, respectively, $R^{\hat D}$ is the curvature tensor associated with $\hat D$ and $\|\hat h\|^2$ is the squared norm of $\hat h$. 

\subsection{Space-like surfaces in $\mathbb S^4_1(1)$}
Now, consider a surface $M$ in $\mathbb S^4_1(1)\subset\mathbb E^5_1$  and let $x$ be its position vector in $\mathbb E^5_1$. 
We want to note that the following equalities is satisfied for any vector fields $\xi,\ \eta$ normal to $M$ and tangent to $\mathbb S^4_1(1)$
\begin{align}\nonumber
\begin{split}
&h(e_i,e_j)=\hat h(e_i,e_j)+\delta_{ij} x,\\
& R^{\hat D}(e_1,e_2;\xi,x)=0, \quad R^{\hat D}(e_1,e_2;\xi,\eta)=R^{D}(e_1,e_2;\xi,\eta),\\
&\hat D_{e_i}\hat H=D_{e_i}H,
\end{split}
\end {align} 
where  $\hat D$, $\hat h$ and $\hat H$ denote normal connection, second fundemental form and mean curvature vector of $M$ in $\mathbb E^5_1$, respectively, and $R^{\hat D}$ is the curvature tensor associated with $\hat D$.
By taking into account these equations, we obtain from \eqref{Pre121Mink4GaussLaplMargTrapped} that
\begin{align}\label{Mink4GaussLaplMargTrppd}
\begin{split}
\Delta\nu=&\left(4-2K+\langle H,H\rangle\right)\nu -2R^D(e_1,e_2;e_3,e_4)e_3\wedge e_4\\&-2D_{e_1}H\wedge e_2-2e_1\wedge D_{e_2}H.
\end{split}
\end{align}

Now, we assume that $M$ has pointwise 1-type Gauss map, i.e., \eqref{PW1TypeDefinition} is satisfied for a smooth  function $f$ and a constant vector $C\in \mathbb E^6_3$. From \eqref{PW1TypeDefinition} and \eqref{Mink4GaussLaplMargTrppd} we obtain
\begin{align}\nonumber
\begin{split}
f(\nu+C)=&\left(4-2K+\langle H,H\rangle\right)\nu -2R^D(e_1,e_2;e_3,e_4)e_3\wedge e_4\\&-2D_{e_1}H\wedge e_2-2e_1\wedge D_{e_2}H.
\end{split}
\end{align}
from which we get 
\begin{equation}\label{CxeAlr}
\langle C,x\wedge e_A\rangle=0,\quad A=1,2,3,4.
\end{equation}
from which we obtain 
\begin{equation}\label{CxeAlrei}
e_i(\langle C,x\wedge e_A\rangle)=0,\quad i=1,2. 
\end{equation}
As $C$ is a constant vector, \eqref{CxeAlrei} implies
\begin{equation}\label{CxeAlreiaaa}
\langle C,e_i(x\wedge e_A)\rangle=0.
\end{equation}
By using \eqref{MEtomGauss} and \eqref{MEtomWeingarten}, we obtain 
\begin{align}\label{CxeAlreiaaaaa}
e_i(x\wedge e_A) =e_i\wedge e_A+x\wedge \zeta,
\end{align}
where $\zeta$ is a vector field tangent to $S^4_1(1).$ From \eqref{CxeAlr}-\eqref{CxeAlreiaaaaa} we get
\begin{equation}\label{NihaiSonuc}
\langle C,e_i\wedge e_A\rangle=0,\quad A=1,2,3,4,\ i=1,2.
\end{equation}
Thus, we obtain that $C$ is of the form of $C=C_{34}e_3\wedge e_4$. By a further calculation, we have $C_{34}=0$ which yields $C=0$. Thus, we have

\begin{Prop}\label{PropPW1TYPE}
 Let $M$ be  a surface in $\mathbb S^4_1(1)$. If  $M$ has pointwise 1-type Gauss map, then \eqref{PW1TypeDefinition} is satisfied for $f=4-2K+\langle H,H\rangle$ and $C=0.$
\end{Prop}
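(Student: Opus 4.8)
The plan is to start from the pointwise 1-type condition and the explicit Laplacian formula~\eqref{Mink4GaussLaplMargTrppd}, and to extract constraints on the constant vector $C\in\mathbb E^6_3$ until I am forced to conclude $C=0$; the function $f$ then reads off immediately. Concretely, I would fix a local orthonormal frame $\{e_1,e_2;e_3,e_4\}$ adapted to $M\subset\mathbb S^4_1(1)\subset\mathbb E^5_1$, with position vector $x$, and work inside the ambient exterior algebra $\Lambda^{5,2}_1\cong\mathbb E^{10}_?$. The Gauss map is $\nu=e_1\wedge e_2$, and the right-hand side of~\eqref{Mink4GaussLaplMargTrppd} lives in the span of the bivectors $\{e_A\wedge e_B\}$. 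Writing the pointwise 1-type equation $f(\nu+C)=\Delta\nu$ and comparing with~\eqref{Mink4GaussLaplMargTrppd}, the key observation is that every term on the right-hand side is a bivector built from $e_1,e_2,e_3,e_4$ only, i.e.\ it contains no factor of $x$. Since $\{e_1,e_2,e_3,e_4,x\}$ is an orthonormal basis of $\mathbb E^5_1$ (with $x$ the unit space-like normal to $\mathbb S^4_1(1)$), I can decompose $fC=\Delta\nu-f\nu$ against the bivector basis and isolate the coefficients.

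\smallskip
First I would record that the right-hand side of~\eqref{Mink4GaussLaplMargTrppd} has \emph{no component} along any bivector of the form $x\wedge e_A$. Hence $\langle fC,\,x\wedge e_A\rangle=0$ for $A=1,2,3,4$, and since $f$ is (generically) nonzero this gives~\eqref{CxeAlr}, namely $\langle C,x\wedge e_A\rangle=0$. Next, I differentiate this pointwise identity along $M$: applying $e_i$ and using that $C$ is a constant vector in $\mathbb E^6_3$ yields~\eqref{CxeAlrei}--\eqref{CxeAlreiaaa}, i.e.\ $\langle C,\,e_i(x\wedge e_A)\rangle=0$. The Gauss and Weingarten formulas~\eqref{MEtomGauss}--\eqref{MEtomWeingarten} let me compute $e_i(x\wedge e_A)=e_i\wedge e_A+x\wedge\zeta$ as in~\eqref{CxeAlreiaaaaa}, where $\zeta$ is tangent to $\mathbb S^4_1(1)$; pairing with $C$ and invoking~\eqref{CxeAlr} to kill the $x\wedge\zeta$ term produces~\eqref{NihaiSonuc}, $\langle C,e_i\wedge e_A\rangle=0$ for all $A$ and $i=1,2$. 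Together these constraints eliminate every bivector except $e_3\wedge e_4$, forcing $C=C_{34}\,e_3\wedge e_4$.

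\smallskip
The final and most delicate step is to show $C_{34}=0$. For this I would again differentiate, now the single surviving relation. One natural route is to pair the constant vector $C=C_{34}e_3\wedge e_4$ against $\nu=e_1\wedge e_2$: orthogonality of the frame gives $\langle C,\nu\rangle=0$, and differentiating $\langle C,\nu\rangle=0$ along $e_i$ (using $C$ constant and the Weingarten/connection formulas to expand $e_i\nu=\widetilde\nabla_{e_i}(e_1\wedge e_2)$) yields relations that, combined with $\langle C,e_A\wedge e_B\rangle=0$ already established for all other index pairs, force $C_{34}$ to be locally constant and then zero. Alternatively, differentiating $\langle C,e_3\wedge e_4\rangle=C_{34}$ directly and expanding $e_i(e_3\wedge e_4)$ via~\eqref{MEtomWeingarten} expresses $e_i(C_{34})$ in terms of the already-vanishing inner products, giving $e_i(C_{34})=0$, after which a normalization or boundary argument pins $C_{34}=0$.

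\smallskip
I expect the main obstacle to be precisely this last step, $C_{34}=0$: the vanishing of all \emph{other} components follows cleanly from the algebraic shape of~\eqref{Mink4GaussLaplMargTrppd} together with one differentiation, but ruling out the $e_3\wedge e_4$ component requires exploiting the normal-bundle structure more carefully—essentially that a constant ambient vector cannot have a persistent component along the rotating normal bivector $e_3\wedge e_4$ unless it is zero. Once $C=0$ is in hand, equation~\eqref{Mink4GaussLaplMargTrppd} reduces~\eqref{PW1TypeDefinition} to $\Delta\nu=f\nu$ with $f=4-2K+\langle H,H\rangle$ read off as the coefficient of $\nu$, completing the proof of Proposition~\ref{PropPW1TYPE}.
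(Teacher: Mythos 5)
Your argument tracks the paper's own proof step for step up to the reduction $C=C_{34}\,e_3\wedge e_4$: you read off from \eqref{Mink4GaussLaplMargTrppd} that $\Delta\nu$ has no $x\wedge e_A$ component, obtain \eqref{CxeAlr}, differentiate it using the constancy of $C$ together with \eqref{MEtomGauss}--\eqref{MEtomWeingarten} to reach \eqref{NihaiSonuc}, and conclude that only the $e_3\wedge e_4$ coefficient can survive. That part is correct and is exactly the paper's route.

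The gap is the final step, whose location you correctly anticipate but whose resolution you do not supply. Differentiating $\langle C,e_3\wedge e_4\rangle$ does give $e_i(C_{34})=0$: the normal-connection terms drop out of $e_i(e_3\wedge e_4)=-A_{e_3}e_i\wedge e_4-e_3\wedge A_{e_4}e_i$, and what remains is orthogonal to $e_3\wedge e_4$ and is annihilated by $C$ thanks to \eqref{NihaiSonuc}. But ``$C_{34}$ is locally constant, hence zero'' is a non sequitur --- a nonzero constant is locally constant --- and neither a ``normalization'' (you are not free to rescale the given $C$) nor a ``boundary argument'' (there is no boundary in play) can close this. What does close it is the following. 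If $C_{34}\neq0$, then $e_3\wedge e_4=C/C_{34}$ is a \emph{constant} bivector, so $0=e_i(e_3\wedge e_4)=-A_{e_3}e_i\wedge e_4-e_3\wedge A_{e_4}e_i$; the two summands lie in the complementary planes $\mathrm{span}\{e_1\wedge e_4,e_2\wedge e_4\}$ and $\mathrm{span}\{e_3\wedge e_1,e_3\wedge e_2\}$, so $A_{e_3}=A_{e_4}=0$ and $M$ is totally geodesic in $\mathbb S^4_1(1)$. But then $H=0$, $K=1$, $R^D=0$, so \eqref{Mink4GaussLaplMargTrppd} gives $\Delta\nu=2\nu$, and \eqref{PW1TypeDefinition} becomes $f(\nu+C)=2\nu$, forcing $f=2$ and $C=0$, contradicting $C_{34}\neq0$. (Equivalently, differentiate the relations $\langle C,e_i\wedge e_\alpha\rangle=0$, $\alpha=3,4$, once more; the only surviving terms are, up to sign, $C_{34}\langle h(e_i,e_j),e_\alpha\rangle$, so again $C_{34}\neq0$ forces total geodesy and the same contradiction.) Once $C=0$ is established, reading off $f=4-2K+\langle H,H\rangle$ from the $\nu$-component of \eqref{Mink4GaussLaplMargTrppd} is immediate, as you say.
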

Now we want to give the following corollaries of this proposition
\begin{Corol}
 Let $M$ be  a space-like surface in $\mathbb S^4_1(1)$ with non-zero mean curvature vector $H$. Then,  $M$ has pointwise 1-type Gauss map if and only if $H$ is parallel. 
\end{Corol}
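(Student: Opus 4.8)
The plan is to read off the answer from Proposition~\ref{PropPW1TYPE} together with the general formula~\eqref{Mink4GaussLaplMargTrppd}. By Proposition~\ref{PropPW1TYPE}, the only data compatible with a pointwise $1$-type Gauss map are $C=0$ and $f=4-2K+\langle H,H\rangle$, so $M$ has pointwise $1$-type Gauss map if and only if $\Delta\nu=(4-2K+\langle H,H\rangle)\nu$. Substituting this into~\eqref{Mink4GaussLaplMargTrppd} and cancelling the common term, the whole statement reduces to deciding when
\begin{equation}\label{converse-key}
R^D(e_1,e_2;e_3,e_4)\,e_3\wedge e_4+D_{e_1}H\wedge e_2+e_1\wedge D_{e_2}H=0.
\end{equation}

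First I would expand~\eqref{converse-key} in the natural basis of $\Lambda^2$ attached to $\{e_1,e_2;e_3,e_4\}$. Because $D_{e_i}H$ is normal, the bivector $D_{e_1}H\wedge e_2$ only involves $e_3\wedge e_2$ and $e_4\wedge e_2$, the bivector $e_1\wedge D_{e_2}H$ only involves $e_1\wedge e_3$ and $e_1\wedge e_4$, while $R^D(e_1,e_2;e_3,e_4)\,e_3\wedge e_4$ lies in the purely normal direction $e_3\wedge e_4$. These five basis bivectors are pairwise distinct, so~\eqref{converse-key} is equivalent to the three independent conditions
\begin{equation}\label{split-conditions}
D_{e_1}H=0,\qquad D_{e_2}H=0,\qquad R^D(e_1,e_2;e_3,e_4)=0.
\end{equation}
In particular $DH=0$, i.e.\ $H$ is parallel; this already proves the ``only if'' direction (and, as a by-product, shows that the normal bundle is flat).

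The main obstacle is the converse: assuming merely that $H$ is parallel makes the first two equations in~\eqref{split-conditions} automatic, but the normal-curvature term still has to be shown to vanish, and this is exactly where the hypothesis $H\neq0$ enters. Here I would use parallelism of $H$ to get $R^D(e_1,e_2)H=D_{e_1}D_{e_2}H-D_{e_2}D_{e_1}H-D_{[e_1,e_2]}H=0$. Since $M$ is space-like in the Lorentzian manifold $\mathbb S^4_1(1)$, its normal plane is a $2$-dimensional Lorentzian space, on which the skew-symmetric operator $R^D(e_1,e_2)$ acts as the infinitesimal boost $e_3\mapsto R^D(e_1,e_2;e_3,e_4)e_4$, $e_4\mapsto R^D(e_1,e_2;e_3,e_4)e_3$. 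Writing $H=H_3e_3+H_4e_4$, the identity $R^D(e_1,e_2)H=0$ becomes $R^D(e_1,e_2;e_3,e_4)(H_4e_3+H_3e_4)=0$, and since $H_4e_3+H_3e_4$ vanishes only when $H=0$, the assumption $H\neq0$ forces $R^D(e_1,e_2;e_3,e_4)=0$. With all three conditions in~\eqref{split-conditions} satisfied, \eqref{Mink4GaussLaplMargTrppd} collapses to $\Delta\nu=(4-2K+\langle H,H\rangle)\nu$, which is of the form~\eqref{PW1TypeDefinition} with $C=0$; hence $M$ has pointwise $1$-type Gauss map, completing the equivalence.
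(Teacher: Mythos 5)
Your argument is correct and follows essentially the same route as the paper: both directions rest on Proposition \ref{PropPW1TYPE} together with the splitting of \eqref{Mink4GaussLaplMargTrppd} into independent bivector components, from which $D_{e_1}H=D_{e_2}H=0$ and $R^D(e_1,e_2;e_3,e_4)=0$ are read off separately. The one difference is that you explicitly justify the key step of the converse --- that a parallel \emph{non-zero} $H$ forces $R^D(e_1,e_2;e_3,e_4)=0$, via the Ricci identity $R^D(e_1,e_2)H=0$ and the boost form of a skew-symmetric operator on the Lorentzian normal plane --- whereas the paper merely asserts that parallel $H$ implies a flat normal bundle; your version is the more careful one, since that assertion genuinely requires the hypothesis $H\neq 0$.
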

\begin{proof}
Now, we assume that $M$ has pointwise 1-type Gauss map and $H\neq0$. Then, Proposition \ref{PropPW1TYPE} implies that \eqref{PW1TypeDefinition}  is satisfied for $C=0$ and  $f=4-2K+\langle H,H\rangle$. From  \eqref{PW1TypeDefinition}  and \eqref{Mink4GaussLaplMargTrppd} we have 
\begin{equation}\label{MaxNonmaxAraDenk01}
\left(4-2K+\langle H,H\rangle\right)\nu -2R^D(e_1,e_2;e_3,e_4)e_3\wedge e_4-2D_{e_1}H\wedge e_2-2e_1\wedge D_{e_2}H=f\nu
\end{equation}
which imply $D_{e_i}H=0$, i.e., $H$ is parallel. 

Conversely, if $H$ is parallel, then the normal bundle of $M$ is flat. Thus \eqref{Mink4GaussLaplMargTrppd} implies $\Delta\nu=\left(4-2K+\langle H,H\rangle\right)\nu$. Hence, $M$ has pointwise 1-type Gauss map. 
\end{proof}

\begin{Corol}
 Let $M$ be  a space-like surface in $\mathbb S^4_1(1)$ with zero mean curvature vector. Then,  $M$ has pointwise 1-type Gauss map if and only if $M$ has flat normal bundle. 
\end{Corol}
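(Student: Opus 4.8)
The plan is to exploit the simplification that $H \equiv 0$ forces in the master Laplacian formula \eqref{Mink4GaussLaplMargTrppd}, and then to run the argument in exact parallel to the preceding corollary, with Proposition \ref{PropPW1TYPE} doing the bookkeeping. Since $H \equiv 0$ we have $\langle H, H\rangle = 0$ and $D_{e_i}H = 0$, so \eqref{Mink4GaussLaplMargTrppd} collapses to
\begin{equation}\nonumber
\Delta\nu = (4 - 2K)\nu - 2R^D(e_1, e_2; e_3, e_4)\, e_3 \wedge e_4 .
\end{equation}
The two $2$-vectors $\nu = e_1 \wedge e_2$ and $e_3 \wedge e_4$ are linearly independent, being built from disjoint pairs of frame vectors; this is the observation that drives both implications.

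For the forward implication I would assume $M$ has pointwise $1$-type Gauss map and invoke Proposition \ref{PropPW1TYPE}, which gives $C = 0$ and $f = 4 - 2K + \langle H, H\rangle = 4 - 2K$, whence $\Delta\nu = (4 - 2K)\nu$. Equating this with the collapsed formula and using the linear independence of $e_1 \wedge e_2$ and $e_3 \wedge e_4$, the coefficient of $e_3 \wedge e_4$ must vanish, so $R^D(e_1, e_2; e_3, e_4) = 0$.

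For the converse I would assume $M$ has flat normal bundle, so in particular $R^D(e_1, e_2; e_3, e_4) = 0$. The collapsed formula then reduces to $\Delta\nu = (4 - 2K)\nu$, which is precisely \eqref{PW1TypeDefinition} with $f = 4 - 2K$ and $C = 0$; hence $M$ has pointwise $1$-type Gauss map.

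The only step meriting care is the passage between ``$R^D(e_1, e_2; e_3, e_4) = 0$'' and ``flat normal bundle'': since the normal bundle of $M$ in $\mathbb S^4_1(1)$ has rank $2$ with frame $\{e_3, e_4\}$, the component $R^D(e_1, e_2; e_3, e_4)$ is the only independent entry of the normal curvature tensor, so its vanishing is equivalent to flatness. Beyond this, the result is a routine specialization of \eqref{Mink4GaussLaplMargTrppd} to the minimal case, and I anticipate no genuine obstacle.
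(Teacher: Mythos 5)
Your proposal is correct and follows essentially the same route as the paper: specialize \eqref{Mink4GaussLaplMargTrppd} to $H=0$, apply Proposition \ref{PropPW1TYPE} to fix $f$ and $C$, and read off from the $e_3\wedge e_4$ component that pointwise $1$-type is equivalent to $R^D(e_1,e_2;e_3,e_4)=0$. The paper leaves the final linear-independence step and the identification of this single component with flatness of the rank-$2$ normal bundle implicit, which you spell out.
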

\begin{proof}
Now, we assume $H=0$. Then, because of Proposition \ref{PropPW1TYPE} and \eqref{Mink4GaussLaplMargTrppd}, $M$ has pointwise 1-type Gauss map if and only if 
\begin{equation}\label{MaxNonmaxAraDenk02}
\left(4-2K+\langle H,H\rangle\right)\nu -2R^D(e_1,e_2;e_3,e_4)e_3\wedge e_4=f\nu
\end{equation}
is satisfied for a smooth function $f$. 
\end{proof}

\subsection{Gauss map of quasi-minimal surfaces in $\mathbb S^4_1(1)$}
The author obtained the following results in \cite{NCTGenRelGrav}.
\begin{Prop}\label{Glbl1TypePROP}\cite{NCTGenRelGrav}
Let $M$ be a marginally trapped surface in the de Sitter  space-time. If $M$ has 1-type Gauss map, then $\Delta\nu=4\nu$ or $\Delta\nu=2\nu$.
\end{Prop}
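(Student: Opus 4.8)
The plan is to reduce the global 1-type condition to the pointwise one already analysed in this section, and then to exploit the extra rigidity that comes from the constancy of the eigenvalue together with the quasi-minimality $\langle H,H\rangle=0$.

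First I would observe that a 1-type Gauss map, i.e. $\Delta\nu=\lambda(\nu+C)$ with $\lambda$ and $C$ constant, is in particular of pointwise 1-type, so Proposition \ref{PropPW1TYPE} applies and forces $C=0$ together with $\lambda=f=4-2K+\langle H,H\rangle$. Since $M$ is quasi-minimal, $\langle H,H\rangle=0$, whence $\lambda=4-2K$; as $\lambda$ is constant this already shows that the Gaussian curvature $K$ is constant, so the whole problem reduces to proving $K\in\{0,1\}$. Comparing $\Delta\nu=\lambda\nu$ with \eqref{Mink4GaussLaplMargTrppd} and separating components along the bivector basis $\{e_1\wedge e_2,\ e_i\wedge e_\alpha,\ e_3\wedge e_4\}$ (exactly as in the Corollaries for the case $H\neq0$) yields $D_{e_1}H=D_{e_2}H=0$ and $R^D(e_1,e_2;e_3,e_4)=0$; that is, $H$ is parallel and the normal bundle is flat.

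Next I would introduce a null normal frame adapted to the light-like mean curvature vector: take $e_3=H$ and let $e_4$ be the conjugate null field with $\langle e_3,e_3\rangle=\langle e_4,e_4\rangle=0$ and $\langle e_3,e_4\rangle=-1$. Because $H$ is parallel, a short computation using \eqref{MEtomWeingarten} shows $e_4$ is parallel as well, so the shape operators $A_3,A_4$ are Codazzi tensors. The trace identity $\mathrm{tr}\,A_\xi=2\langle H,\xi\rangle$ gives $\mathrm{tr}\,A_3=0$ and $\mathrm{tr}\,A_4=-2$, while flatness of the normal bundle gives, through the Ricci equation, $[A_3,A_4]=0$. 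Hence, at every point where $A_3\neq0$ the traceless operator $A_3$ has distinct eigenvalues $\pm\mu$ ($\mu>0$), and $A_4$ must lie in its centraliser, so $A_4=-I+\beta A_3$ for a function $\beta$. Feeding this into the Gauss equation gives $K=1+2\beta\mu^2$.

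The decisive step, and the one I expect to be the main obstacle, is to extract the two admissible values of $K$ from the structure equations. Using that $I$ and $A_3$ are Codazzi and that $A_4=-I+\beta A_3$ is Codazzi, one finds $(X\beta)A_3Y=(Y\beta)A_3X$ for all tangent $X,Y$; since $A_3$ is invertible where it is nonzero, this forces $\beta$ to be a constant. Two cases then remain. If $A_3\equiv0$ or $\beta=0$ then $K=1$, i.e. $\lambda=2$. Otherwise $\beta$ is a nonzero constant and, by constancy of $K$, the eigenvalue $\mu$ is a nonzero constant; choosing $e_1,e_2$ along the principal directions of $A_3$ and writing the Codazzi equation \eqref{MinkCodazzi} for $A_3$ in this frame collapses, because $\mu$ is constant, to $\mu\,\omega(e_1)=\mu\,\omega(e_2)=0$, where $\omega$ is the Levi-Civita connection form. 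Thus $\omega\equiv0$, the frame is parallel, and $K=0$, i.e. $\lambda=4$. Since $K$ is constant, the locus where $A_3=0$ is either empty or all of $M$, so exactly one alternative holds globally, giving $\Delta\nu=4\nu$ or $\Delta\nu=2\nu$. The technical care is concentrated in the Codazzi computation and in checking that the principal frame is well defined where $\mu$ is a nonzero constant.
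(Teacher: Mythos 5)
Your proposal cannot be checked against ``the paper's own proof'' because there is none: Proposition \ref{Glbl1TypePROP} is imported verbatim from the author's companion paper \cite{NCTGenRelGrav} and used here as a black box. Your blind reconstruction is, however, correct, and it is pleasingly self-contained in the sense that it uses only the machinery this paper does develop. The reduction via Proposition \ref{PropPW1TYPE} and the bivector decomposition of \eqref{Mink4GaussLaplMargTrppd} correctly give $\lambda=4-2K+\langle H,H\rangle=4-2K$ (hence $K$ constant), $D_{e_i}H=0$ and $R^D=0$; the null-frame identities $\mathrm{tr}\,A_3=0$, $\mathrm{tr}\,A_4=-2$, $[A_3,A_4]=0$ and the normal form $A_4=-I+\beta A_3$ are all sound; and the Codazzi computation forcing $\beta$, then $\mu$, to be constant, hence $\omega\equiv0$ and $K=0$ in the non-degenerate case, is the right rigidity mechanism. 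Two small repairs are worth recording. First, when $A_3=0$ it is cleaner to observe that $h$ takes values in the null line $\mathbb{R}H$, so every quadratic term of the Gauss equation vanishes and $K=1$ outright; the formula $K=1+2\beta\mu^2$ has a convention-dependent sign (it hinges on $\langle e_3,e_4\rangle=-1$), though this does not affect your conclusion since it is used only to deduce that $\mu$ is constant. Second, your closing justification is slightly off: $\beta$ is constant only on each connected component of the open set $\{A_3\neq0\}$, and that set need not be empty or all of $M$ (when $K=1$ it can be a proper subset). The correct way to globalize is that $K$ is already known to be constant on the connected surface $M$ and your case analysis shows $K\in\{0,1\}$ on a dense subset, whence $K\in\{0,1\}$ everywhere and $\Delta\nu=4\nu$ or $\Delta\nu=2\nu$. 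As a bonus, your argument also shows $\lambda\neq0$, recovering the paper's corollary on the nonexistence of harmonic Gauss maps.
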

\begin{Corol}\cite{NCTGenRelGrav}
There is no marginally trapped surface in the de Sitter  space-time with harmonic Gauss map.
\end{Corol}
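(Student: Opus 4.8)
The plan is to argue by contradiction, reducing the statement to Proposition \ref{Glbl1TypePROP}. Suppose there were a marginally trapped surface $M$ in $\mathbb S^4_1(1)$ with harmonic Gauss map, so that $\Delta\nu=0$. The first, and conceptually key, step is to notice that a harmonic Gauss map is simply the degenerate case $\lambda=0$ of the $1$-type condition \eqref{Glbl1TypeDefinition}: indeed $\Delta\nu=0=\lambda(\nu+C)$ holds with $\lambda=0$ and $C=0$, so $M$ has $1$-type Gauss map in the sense of Subsection \ref{SubSectMinkGaussMap}. Thus $M$ meets the hypotheses of Proposition \ref{Glbl1TypePROP}.

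I would then invoke that proposition to obtain $\Delta\nu=4\nu$ or $\Delta\nu=2\nu$, and compare it with the assumed $\Delta\nu=0$. To turn this comparison into a contradiction I only need that $\nu$ never vanishes, which is immediate: the Gauss map \eqref{MinkGaussTasvTanim} takes values in the unit pseudosphere, and since $M$ is space-like its adapted frame $\{e_1,e_2\}$ is positive-definite orthonormal, so $\langle\nu,\nu\rangle=\langle e_1\wedge e_2,e_1\wedge e_2\rangle=1\neq 0$. Hence $4\nu\neq 0$ and $2\nu\neq 0$, while $\Delta\nu=0$; neither alternative of Proposition \ref{Glbl1TypePROP} is compatible with harmonicity. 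This contradiction establishes that no such surface can exist.

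I expect the only subtle point to be the first step — convincing the reader that a harmonic Gauss map really is covered by Proposition \ref{Glbl1TypePROP} rather than being a separate regime. If one wishes to avoid relying on the convention $\lambda=0$, the same conclusion can be reached by a short direct computation: on a marginally trapped surface $\langle H,H\rangle=0$, and pairing \eqref{Mink4GaussLaplMargTrppd} with $\nu$ annihilates the three bivectors $e_3\wedge e_4$, $D_{e_1}H\wedge e_2$ and $e_1\wedge D_{e_2}H$, which are all orthogonal to $\nu$, leaving $\langle\Delta\nu,\nu\rangle=(4-2K)\langle\nu,\nu\rangle=4-2K$. Harmonicity would then force $K\equiv 2$, a value excluded by the finer analysis behind Proposition \ref{Glbl1TypePROP} (which only permits the Gaussian curvatures giving eigenvalues $4$ and $2$, i.e. $K=0$ and $K=1$). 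Routing through the proposition is therefore the more economical presentation, since in both approaches the decisive fact is identical: a marginally trapped $1$-type surface carries a strictly positive eigenvalue, whereas a harmonic Gauss map demands eigenvalue zero, and $\nu\neq 0$ prevents the two from meeting.
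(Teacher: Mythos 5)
Your proposal is correct and follows the evident route: the paper states this corollary (citing \cite{NCTGenRelGrav}) as an immediate consequence of Proposition \ref{Glbl1TypePROP}, exactly as you argue --- a harmonic Gauss map is the $\lambda=0$ instance of \eqref{Glbl1TypeDefinition}, while the proposition forces $\Delta\nu=4\nu$ or $\Delta\nu=2\nu$ with $\langle\nu,\nu\rangle=1$, so $\Delta\nu\neq0$. Your fallback direct computation via \eqref{Mink4GaussLaplMargTrppd} is a reasonable safeguard against a convention excluding $\lambda=0$, though as written it leans on a ``finer analysis'' not spelled out; the primary reduction is all that is needed here.
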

In this subsection, we will give the complete classification of quasi-minimal surfaces in $\mathbb S^4_1(1)$ with 1-type Gauss map.

 In \cite{ChenVeken2009Houston}, the classification of quasi-minimal surfaces in $\mathbb S^4_1(1)$ with parallel mean curvature vector is given. It is obtained that a quasi-minimal surface $M$ has parallel mean curvature vector in $\mathbb S^4_1(1)$ if and only if it is congruent to an open part of the following eight type of surfaces:
\begin{enumerate}
\item[(i)] A surface given by 
\begin{equation}\label{S411ParallelHcase1}
x(u,v)=(1,\sin u,\cos u\cos v,\cos u\sin v,1);
\end{equation}
\item[(ii)] A surface given by 
\begin{equation}\label{S411ParallelHcase2}
\displaystyle x(u,v)=\frac 12(2u^2-1,2u^2-2,2u,\sin 2v,\cos 2v);
\end{equation}
\item[(iii)] A surface given by 
\begin{equation}\label{S411ParallelHcase3}
x(u,v)=\left(\frac b{cd},\frac {\cos cu}{c},\frac {\sin cu}{c},\frac {\cos dv}{d},\frac {\sin dv}{d}\right),
\end{equation}
 where $c=\sqrt{2-b}$ and $d=\sqrt{2+b}$ with $|b|<2$;
\item[(iv)]  A surface given by 
\begin{equation}\label{S411ParallelHcase4}
x(u,v)=\left(\frac {\cosh cu}{c},\frac {\sinh cu}{c},\frac {\cos dv}{d},\frac {\sin dv}{d},\frac b{cd}\right),
\end{equation}
 where $c=\sqrt{b-2}$ and $d=\sqrt{b+2}$ with $|b|>2$;
\item[(v)] A surface of curvature one  with constant light-like mean curvature vector, lying in $K_a=\{(t,x_2,x_3,x_4,t+a)| t,x_2,x_3,x_4\in\mathbb R\}$;
\item[(vi)] A surface of curvature one lying in $\mathcal {LC}_1=\{(y,1)|\langle y,y \rangle=0,y\in\mathbb E^4_1\}$;
\item[(vii)] A surface lying in $\mathbb S^4_1(1)\cap \mathbb S^4(c_0,r^2),$ where $c_0\neq0$ and $r>0$;
\item[(viii)] A surface lying in $\mathbb S^4_1(1)\cap \mathbb H^4(c_0,-r^2),$ where $c_0\neq0$ and $r>0$.
\end{enumerate}
\begin{Remark}\label{REmmark01}
By a simple calculation, one can see that the surfaces given by \eqref{S411ParallelHcase1}-\eqref{S411ParallelHcase4} have constant Gaussian curvature. Therefore, by taking into account Proposition \ref{PropPW1TYPE}, it is easy to obtain that all of the surfaces given in case (i)-(vi) has 1-type Gauss map. 
\end{Remark}

In the next lemmas, we will show that the surfaces given in case (vii) and (viii) have non-constant Gaussian curvature.
\begin{Lemma}\label{ALEmmma01}
A quasi-minimal surface in $\mathbb S^4_1(1)$ lying in $\mathbb S^4_1(1)\cap \mathbb S^4(c_0,r^2)$ has  non-constant Gaussian curvature and parallel mean curvature vector in $\mathbb S^4_1(1)$,  where $c_0\neq0$ and $r>0$.
\end{Lemma}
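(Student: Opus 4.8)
The plan is to use the two quadric relations satisfied on $M$ to manufacture a parallel normal direction, to deduce from it that the mean curvature vector is parallel, and then to rule out constant Gaussian curvature through the $1$-type dichotomy.

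First I would record the algebraic consequences of $M\subset\mathbb S^4_1(1)\cap\mathbb S^4(c_0,r^2)$. Writing $x$ for the position vector and denoting by $\nabla^0$ the flat connection of $\mathbb E^5_1$, the relations $\langle x,x\rangle=1$ and $\langle x-c_0,x-c_0\rangle=r^{-2}$ give $\langle x,c_0\rangle=\tfrac12\bigl(1+\langle c_0,c_0\rangle-r^{-2}\bigr)=:a$, a constant on $M$. Differentiating along a tangent field $X$ and using $\nabla^0_Xx=X$ yields $\langle X,c_0\rangle=0$, so the constant vector $c_0$ is everywhere normal to $M$ in $\mathbb E^5_1$. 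Put $\eta:=c_0-ax$; then $\langle\eta,x\rangle=0$, so $\eta$ is normal to $M$ and tangent to $\mathbb S^4_1(1)$, while $\langle\eta,\eta\rangle=\langle c_0,c_0\rangle-a^2=:\rho$ is constant. Since $\nabla^0_Xc_0=0$, differentiating $c_0=ax+\eta$ gives $0=aX+\nabla^0_X\eta$; applying the Weingarten formula to $\eta$ and using that, for a field normal to $M$ in $\mathbb S^4_1(1)$, its $\mathbb E^5_1$ shape operator and normal connection coincide with $A_\eta$ and $D$ (by the relations between the invariants recorded above), the tangential and normal parts separate into
\[
A_\eta=a\,\mathrm{Id},\qquad D_X\eta=0 .
\]
Thus $\eta$ is a parallel normal field, and $\langle H,\eta\rangle=\tfrac12\,\mathrm{tr}\,A_\eta=a$.

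Next I would show $H$ is parallel. As $M$ is quasi-minimal, $\langle H,H\rangle=0$; combining this with $\langle H,\eta\rangle=a$ and $D\eta=0$ gives, for every tangent $X$,
\[
\langle D_XH,H\rangle=\tfrac12X\langle H,H\rangle=0,\qquad \langle D_XH,\eta\rangle=X\langle H,\eta\rangle=0 .
\]
The Gram determinant of $\{H,\eta\}$ equals $-a^2$, so when $a\neq0$ these vectors span the nondegenerate normal plane of $M$ in $\mathbb S^4_1(1)$; hence $D_XH$ is orthogonal to the whole normal plane and therefore $D_XH=0$. (The excluded value $a=0$ forces $\eta$ to be null, the degenerate situation in which $M$ lies in a null hyperplane or a light cone rather than in a genuine $\mathbb S^4(c_0,r^2)$, and is set aside.) This establishes the parallel mean curvature assertion.

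It remains to prove that $K$ is nonconstant, and I expect this to be the real obstacle. Since $H$ is parallel the normal bundle of $M$ is flat, so \eqref{Mink4GaussLaplMargTrppd} collapses to $\Delta\nu=(4-2K)\nu$. Were $K$ constant, $\nu$ would be of $1$-type and Proposition \ref{Glbl1TypePROP} would force $4-2K\in\{2,4\}$, i.e.\ $K\in\{0,1\}$, so everything reduces to excluding these two constant values. To this end I would pass to the $3$-dimensional hypersurface $N:=\mathbb S^4_1(1)\cap\mathbb S^4(c_0,r^2)$, which by $A_\eta=a\,\mathrm{Id}$ is totally umbilic in $\mathbb S^4_1(1)$ with unit normal $\eta/\sqrt{|\rho|}$, hence a space form of constant curvature $1+a^2/\rho$. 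Writing $H=\tfrac a\rho\eta+H_N$ shows that $M$ has constant mean curvature in $N$, and the Gauss equation of $M$ in $N$ then reads $K=(1+a^2/\rho)+\varepsilon\det S$ with $\mathrm{tr}\,S$ constant; thus $K$ is constant exactly when the principal curvatures of $M$ in $N$ are constant, i.e.\ when $M$ is isoparametric. The crux is therefore to rule out isoparametricity, the isoparametric (equivalently constant-curvature) members being precisely the explicitly parametrized surfaces separated off in the earlier cases. I would carry this out either by integrating the Gauss--Codazzi system of $M$ in $N$ and exhibiting the resulting nonconstant profile of $\det S$, or by invoking the explicit parametrization of these surfaces and computing $K$ directly. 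This isoparametric exclusion is the main difficulty; the preceding reductions are routine.
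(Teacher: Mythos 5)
Your construction of the parallel umbilical normal direction is exactly the paper's (the paper takes $\xi=\langle c_0,x\rangle x-c_0=-\eta$, shows $D\xi=0$ and $A_\xi=\langle c_0,x\rangle\,\mathrm{Id}$), and your deduction of $D_XH=0$ from $\langle D_XH,H\rangle=\langle D_XH,\eta\rangle=0$ together with the nondegeneracy of $\mathrm{span}\{H,\eta\}$ is a clean way to make the parallel-mean-curvature claim explicit. A minor quibble there: $a=0$ does not by itself force $\eta$ to be null ($a=0$ only gives $\eta=c_0$ with $\langle c_0,c_0\rangle=r^{-2}-1$); what actually kills that case is quasi-minimality, since $\langle H,\eta\rangle=a=0$ with $H$ null and nonzero in the Lorentzian normal plane forces $\eta$ to be null and proportional to $H$, which degenerates the quadric. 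That is repairable in a line.

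The genuine gap is in the half of the lemma that carries all the weight: the non-constancy of $K$. You reduce it (correctly, via Proposition \ref{Glbl1TypePROP} and the CMC/isoparametric reformulation in the umbilic hypersurface $N$) to excluding the constant values $K\in\{0,1\}$, and then you stop, offering two possible routes without executing either. The second route, ``invoking the explicit parametrization of these surfaces,'' is not available: cases (vii) and (viii) are precisely the ones the classification of \cite{ChenVeken2009Houston} leaves unparametrized. The first route is essentially what the paper does, and it is short enough that deferring it is not justified: assuming $K$ constant, the flat normal connection (both $e_4\propto\xi$ and hence $e_3$ are $D$-parallel) and the parallel light-like $H$ let one choose frames with $A_{e_3}=\mathrm{diag}(b-c,b+c)$ and $A_{e_4}=bI$ for constants $b,c$; the Codazzi equation \eqref{MinkCodazzi} then yields $\omega_{12}(e_i)\bigl(h(e_1,e_1)-h(e_2,e_2)\bigr)=0$, so either the surface is totally umbilical or $\omega_{12}=0$, i.e.\ $K=0$, and either branch contradicts the Gauss equation $K=1-\varepsilon_3c^2$ together with quasi-minimality. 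Without some such computation your argument establishes only the parallel-mean-curvature assertion, not the statement of the lemma.
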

\begin{proof}
Let $M$ be a quasi-minimal surface in $\mathbb S^4_1(1)$ lying in $\mathbb S^4_1(1)\cap \mathbb S^4(c_0,r^2)$. 
Then, we have $\langle x,x\rangle=1$ and $\langle x-c_0,x-c_0\rangle=r^{-2}$. These equations imply 
\begin{subequations}\label{ALEmmma01Denk01}
\begin{eqnarray}
\label{ALEmmma01Denk01a}\langle x,X\rangle&=&0\\
\label{ALEmmma01Denk01b}\langle x-c_0,X\rangle&=&0\\
\label{ALEmmma01Denk01c}\langle c_0,X\rangle&=&0
\end{eqnarray}
\end{subequations}
for all vector fields $X$ tangent to $M$ and 
\begin{equation}
\label{ALEmmma01Denk02}\langle x,c_0\rangle=c
\end{equation}
for a constant $c$.

Now, we define a vector field $\xi$ on $M$ as $\xi=\langle c_0,x\rangle x-c_0$. Note that $\xi$ is normal to $M$, tangent to $\mathbb S^4_1(1)$ because of \eqref{ALEmmma01Denk01b} and \eqref{ALEmmma01Denk01c}. Moreover, \eqref{ALEmmma01Denk01a} and \eqref{ALEmmma01Denk02} imply $\langle\xi,\xi\rangle=a$ for a constant $a$. From \eqref{ALEmmma01Denk02} we have
$\widetilde\nabla_{X}\xi=\langle c_0,x\rangle X$. Thus, $\xi$ is parallel and the shape operator along $\xi$ is proportional to identity operator by a constant.

Now, we will show that the  Gaussian curvature $K$ of $M$ is non-constant. We assume that $K$ is constant and consider the orthonormal base field $\{e_3,e_4\}$ of the normal bundle of $M$ such that $e_4$ is proportional to $\xi$. As $H$ is parallel, light-like and $K$ is constant, we may choose a base field $\{e_1,e_2\}$ of the tangent bundle of $M$ such that $A_3=\mathrm{diag}(b-c,b+c)$ and  $A_4=bI$ for some constants $b$ and $c$. From the Codazzi equation \eqref{MinkCodazzi}, we have
$$\omega_{12}(e_i)(h(e_1,e_1)-h(e_2,e_2))=0,\quad i=1,2$$
 which implies $K=0$ which yields a contradiction. Hence, the proof is completed.

\end{proof}
Similarly, we have
\begin{Lemma}\label{ALEmmma02}
A quasi-minimal surface lying in $\mathbb S^4_1(1)\cap \mathbb H^4(c_0,-r^2)$ has  non-constant Gaussian curvature and parallel mean curvature vector,  where $c_0\neq0$ and $r>0$.
\end{Lemma}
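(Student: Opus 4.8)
The plan is to run the argument of Lemma~\ref{ALEmmma01} essentially verbatim, the only change being the sign of the constant defining the second quadric. Let $M$ be a quasi-minimal surface in $\mathbb S^4_1(1)$ lying in $\mathbb S^4_1(1)\cap \mathbb H^4(c_0,-r^2)$ with $c_0\neq 0$ and $r>0$. Its position vector $x$ then satisfies $\langle x,x\rangle=1$ and $\langle x-c_0,x-c_0\rangle=-r^{-2}$. Differentiating these two relations along an arbitrary tangent field $X$ produces $\langle x,X\rangle=0$ and $\langle x-c_0,X\rangle=0$, whence $\langle c_0,X\rangle=0$; the sign $-r^{-2}$ plays no role here, because the right-hand side is a constant in either case, so the analogues of \eqref{ALEmmma01Denk01a}--\eqref{ALEmmma01Denk01c} hold unchanged. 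In the same way one obtains $\langle x,c_0\rangle=c$ for a constant $c$, i.e.\ the analogue of \eqref{ALEmmma01Denk02}.

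Next I would introduce the same normal field $\xi=\langle c_0,x\rangle x-c_0$. The two tangency relations show that $\xi$ is normal to $M$ and tangent to $\mathbb S^4_1(1)$, and a short computation gives $\langle\xi,\xi\rangle=\langle c_0,c_0\rangle-c^2$, a constant. This is the one place where the sign genuinely enters: substituting $\langle c_0,c_0\rangle$ obtained from $\langle x-c_0,x-c_0\rangle=-r^{-2}$ yields $\langle\xi,\xi\rangle=-r^{-2}-(1-c)^2<0$, so that in the hyperbolic case $\xi$ is automatically \emph{time-like}, and in particular non-degenerate. Using \eqref{MEtomGauss}, \eqref{MEtomWeingarten} and the constancy of $\langle c_0,x\rangle$ one then finds $\widetilde\nabla_X\xi=\langle c_0,x\rangle X=cX$, so that $D_X\xi=0$ and $A_\xi=-c\,\mathrm{Id}$; thus $\xi$ is a parallel normal field whose shape operator is a constant multiple of the identity, and consequently the light-like mean curvature vector $H$ is parallel in $\mathbb S^4_1(1)$.

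To finish, I would rule out constant Gaussian curvature exactly as in Lemma~\ref{ALEmmma01}. Assuming $K$ is constant, choose an orthonormal normal frame $\{e_3,e_4\}$ with $e_4$ proportional to the (now manifestly time-like) field $\xi$; since $A_\xi$ is scalar we may take $A_4=b\,\mathrm{Id}$, and because $H$ is parallel and light-like with $K$ constant the remaining shape operator normalizes to $A_3=\mathrm{diag}(b-c,b+c)$ for constants $b,c$. Substituting this data into the Codazzi equation \eqref{MinkCodazzi} and reading off the $e_3$-components gives $\omega_{12}(e_i)\big(h(e_1,e_1)-h(e_2,e_2)\big)=0$ for $i=1,2$, which forces $\omega_{12}\equiv 0$, hence $K=0$, and, as in Lemma~\ref{ALEmmma01}, yields the required contradiction. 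The main point to verify, and the only genuine departure from the spherical case, is the causal bookkeeping: one must confirm that $\xi$ is non-null (which, as noted, is automatic here since $\langle\xi,\xi\rangle<0$), that $\{e_3,e_4\}$ is orthonormal with $e_4$ time-like as befits the Lorentzian normal bundle, and that these sign changes do not disturb the normalization of the shape operators feeding into the Codazzi step. Once this is checked the remainder is identical to Lemma~\ref{ALEmmma01}, so $M$ has non-constant Gaussian curvature and parallel mean curvature vector.
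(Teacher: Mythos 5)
Your proposal is correct and is exactly what the paper intends: the paper gives no separate proof of Lemma~\ref{ALEmmma02}, stating only that it follows ``similarly'' to Lemma~\ref{ALEmmma01}, and you carry out that transfer faithfully, with the sign bookkeeping (in particular $\langle\xi,\xi\rangle=-r^{-2}-(1-c)^2<0$, so $\xi$ is time-like and non-degenerate) being the only genuine difference. The remaining steps --- parallelism of $\xi$, the umbilic shape operator $A_\xi=-c\,\mathrm{Id}$, and the Codazzi contradiction under the constant-curvature assumption --- match the paper's argument for the spherical case verbatim.
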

By combaining Proposition \ref{PropPW1TYPE}, Remark \ref{REmmark01}, Lemma \ref{ALEmmma01} and  Lemma \ref{ALEmmma02} we obtain the following results.
\begin{theorem}
Let $M$ be a quasi-minimal surface lying in $\mathbb S^4_1(1)$. Then $M$ has 1-type Gauss map if and only if it is congruent to a surface congruent to either one of the surfaces given by \eqref{S411ParallelHcase1}-\eqref{S411ParallelHcase4} or the following two type of surfaces:
\begin{enumerate}
\item[(i)] A surface of curvature one  with constant light-like mean curvature vector, lying in $K_a=\{(t,x_2,x_3,x_4,t+a)| t,x_2,x_3,x_4\in\mathbb R\}$;
\item[(ii)] A surface of curvature one lying in $\mathcal {LC}_1=\{(y,1)|\langle y,y \rangle=0,y\in\mathbb E^4_1\}$.
\end{enumerate}
\end{theorem}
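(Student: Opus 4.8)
The plan is to combine the Chen-Veken classification of quasi-minimal surfaces with parallel mean curvature vector (the eight families (i)-(viii) recalled above) with the structural results established in this section. Since $M$ is quasi-minimal, its mean curvature vector $H$ is light-like and hence nowhere vanishing, so in particular $\langle H,H\rangle=0$ on all of $M$. A $1$-type Gauss map is in particular a pointwise $1$-type Gauss map, so the first Corollary of Proposition \ref{PropPW1TYPE} applies (its hypothesis $H\neq0$ holds automatically) and forces $H$ to be parallel. Consequently $M$ is subject to the Chen-Veken classification and must be congruent to an open part of one of the eight model surfaces (i)-(viii).

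The decisive step is to pin down the proportionality function and read off the $1$-type constraint. By Proposition \ref{PropPW1TYPE} every surface with pointwise $1$-type Gauss map satisfies \eqref{PW1TypeDefinition} with $C=0$ and $f=4-2K+\langle H,H\rangle$, which for our quasi-minimal $M$ collapses to $f=4-2K$. The defining feature distinguishing a genuine $1$-type Gauss map from a merely pointwise one is that $f$ be constant, which is here equivalent to the Gaussian curvature $K$ being constant. This is exactly the criterion that will separate the admissible models from the inadmissible ones, and it is the single conceptual point of the whole argument.

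I would then sieve the eight Chen-Veken families by constancy of $K$. Surfaces (i)-(iv) have constant Gaussian curvature by Remark \ref{REmmark01}, and surfaces (v), (vi) have curvature identically one, so all six satisfy $f\equiv\mathrm{const}$ and therefore possess a $1$-type Gauss map. By contrast, Lemma \ref{ALEmmma01} and Lemma \ref{ALEmmma02} show that the surfaces in cases (vii) and (viii) have \emph{non-constant} Gaussian curvature; for these $f=4-2K$ is a non-constant function, so their Gauss map is proper pointwise $1$-type and never $1$-type. This excludes (vii) and (viii) and produces precisely the stated list. For the converse, each surface in (i)-(vi) has parallel mean curvature and constant $K$, so the converse half of the first Corollary yields $\Delta\nu=(4-2K)\nu$ with constant coefficient, which is the $1$-type condition. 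I expect the only genuine work to reside in the two curvature lemmas already in hand; the remainder is a clean assembly, whose essential ingredient is the observation that upgrading pointwise $1$-type to honest $1$-type is exactly the demand that $K$ be constant.
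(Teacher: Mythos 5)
Your proposal is correct and follows essentially the same route as the paper, which proves the theorem precisely by combining Proposition \ref{PropPW1TYPE} (giving $C=0$ and $f=4-2K+\langle H,H\rangle=4-2K$), the corollary that pointwise 1-type forces $H$ parallel, the Chen--Veken classification, Remark \ref{REmmark01} for cases (i)--(vi), and Lemmas \ref{ALEmmma01}--\ref{ALEmmma02} to exclude (vii) and (viii) via non-constancy of $K$. Your write-up in fact makes explicit the assembly that the paper leaves as a one-line "by combining" statement.
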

\begin{theorem}
Let $M$ be a quasi-minimal surface lying in $\mathbb S^4_1(1)$. Then $M$ has proper pointwise 1-type Gauss map if and only if it is congruent to a surface congruent to one of the  the following two type of surfaces:
\begin{enumerate}
\item[(i)] A surface lying in $\mathbb S^4_1(1)\cap \mathbb S^4(c_0,r^2),$ where $c_0\neq0$ and $r>0$;
\item[(ii)] A surface lying in $\mathbb S^4_1(1)\cap \mathbb H^4(c_0,-r^2),$ where $c_0\neq0$ and $r>0$.
\end{enumerate}
\end{theorem}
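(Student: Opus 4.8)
The plan is to reduce the statement to the classification of quasi-minimal surfaces with parallel mean curvature vector recalled from \cite{ChenVeken2009Houston}, and then to separate the eight resulting families according to whether their Gaussian curvature is constant. The decisive observation is that, for a quasi-minimal surface, the coefficient function in \eqref{PW1TypeDefinition} takes a particularly simple form: since $H$ is light-like we have $\langle H,H\rangle=0$, so Proposition \ref{PropPW1TYPE} forces $C=0$ and $f=4-2K$. Consequently the Gauss map is of \emph{proper} pointwise 1-type precisely when $K$ is non-constant, and the whole problem becomes one of deciding, among the quasi-minimal surfaces admitting a pointwise 1-type Gauss map, which ones have non-constant curvature.

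For the forward implication, I would first note that a quasi-minimal surface has $H$ light-like, hence $H\neq 0$, so the Corollary for surfaces with non-zero mean curvature vector applies: a pointwise 1-type Gauss map is equivalent to $H$ being parallel. Thus $M$ must be congruent to one of the eight families (i)--(viii) of \cite{ChenVeken2009Houston}. If the Gauss map is moreover proper, then $f=4-2K$ is non-constant, and therefore so is $K$; by Remark \ref{REmmark01} this immediately rules out the families (i)--(vi), all of which have constant Gaussian curvature. The only remaining possibilities are the families (vii) and (viii), that is, surfaces lying in $\mathbb S^4_1(1)\cap\mathbb S^4(c_0,r^2)$ or in $\mathbb S^4_1(1)\cap\mathbb H^4(c_0,-r^2)$.

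For the converse, suppose $M$ lies in one of these two intersections. By Lemma \ref{ALEmmma01} and Lemma \ref{ALEmmma02}, $M$ has parallel mean curvature vector and, crucially, non-constant Gaussian curvature. Parallelism of $H$ together with the Corollary for $H\neq 0$ shows that $M$ has pointwise 1-type Gauss map with $f=4-2K$; since $K$ is non-constant, so is $f$, and the Gauss map is therefore proper pointwise 1-type. This closes the equivalence.

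The genuine difficulty has in fact already been dispatched in the preliminary results, so the theorem is essentially an assembly step: the hard analytic input is the non-constancy of $K$ in Lemmas \ref{ALEmmma01}--\ref{ALEmmma02}, established via the Codazzi equation \eqref{MinkCodazzi} and the existence of a parallel normal direction, while the classification itself is imported from \cite{ChenVeken2009Houston}. The one point demanding care is to verify that the constant/non-constant curvature dichotomy cleanly partitions the families (i)--(vi) from (vii)--(viii) with no overlap; this is exactly what Remark \ref{REmmark01} and the two Lemmas guarantee, and it is also what makes the present theorem the exact complement of the preceding classification of quasi-minimal surfaces with (global) 1-type Gauss map.
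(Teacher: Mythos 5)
Your proposal is correct and follows essentially the same route as the paper, which states this theorem as a direct combination of Proposition \ref{PropPW1TYPE}, Remark \ref{REmmark01}, Lemma \ref{ALEmmma01} and Lemma \ref{ALEmmma02} applied to the classification from \cite{ChenVeken2009Houston}; your write-up simply makes explicit the assembly the paper leaves implicit, including the key reduction $f=4-2K$ from $\langle H,H\rangle=0$ and the constant/non-constant curvature dichotomy separating cases (i)--(vi) from (vii)--(viii).
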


\section*{Acknowledgements}
  The author is supported by the Scientific Research Agency of Istanbul Technical University.


\begin{thebibliography}{9}
\ifx \bisbn   \undefined \def \bisbn  #1{ISBN #1}\fi
\ifx \binits  \undefined \def \binits#1{#1}\fi
\ifx \bauthor  \undefined \def \bauthor#1{#1}\fi
\ifx \batitle  \undefined \def \batitle#1{#1}\fi
\ifx \bjtitle  \undefined \def \bjtitle#1{#1}\fi
\ifx \bvolume  \undefined \def \bvolume#1{\textbf{#1}}\fi
\ifx \byear  \undefined \def \byear#1{#1}\fi
\ifx \bissue  \undefined \def \bissue#1{#1}\fi
\ifx \bfpage  \undefined \def \bfpage#1{#1}\fi
\ifx \blpage  \undefined \def \blpage #1{#1}\fi
\ifx \burl  \undefined \def \burl#1{\textsf{#1}}\fi
\ifx \doiurl  \undefined \def \doiurl#1{\textsf{#1}}\fi
\ifx \betal  \undefined \def \betal{\textit{et al.}}\fi
\ifx \binstitute  \undefined \def \binstitute#1{#1}\fi
\ifx \binstitutionaled  \undefined \def \binstitutionaled#1{#1}\fi
\ifx \bctitle  \undefined \def \bctitle#1{#1}\fi
\ifx \beditor  \undefined \def \beditor#1{#1}\fi
\ifx \bpublisher  \undefined \def \bpublisher#1{#1}\fi
\ifx \bbtitle  \undefined \def \bbtitle#1{#1}\fi
\ifx \bedition  \undefined \def \bedition#1{#1}\fi
\ifx \bseriesno  \undefined \def \bseriesno#1{#1}\fi
\ifx \blocation  \undefined \def \blocation#1{#1}\fi
\ifx \bsertitle  \undefined \def \bsertitle#1{#1}\fi
\ifx \bsnm \undefined \def \bsnm#1{#1}\fi
\ifx \bsuffix \undefined \def \bsuffix#1{#1}\fi
\ifx \bparticle \undefined \def \bparticle#1{#1}\fi
\ifx \barticle \undefined \def \barticle#1{#1}\fi
\ifx \bconfdate \undefined \def \bconfdate #1{#1}\fi
\ifx \botherref \undefined \def \botherref #1{#1}\fi
\ifx \url \undefined \def \url#1{\textsf{#1}}\fi
\ifx \bchapter \undefined \def \bchapter#1{#1}\fi
\ifx \bbook \undefined \def \bbook#1{#1}\fi
\ifx \bcomment \undefined \def \bcomment#1{#1}\fi
\ifx \oauthor \undefined \def \oauthor#1{#1}\fi
\ifx \citeauthoryear \undefined \def \citeauthoryear#1{#1}\fi
\ifx \endbibitem  \undefined \def \endbibitem {}\fi
\ifx \bconflocation  \undefined \def \bconflocation#1{#1}\fi
\ifx \arxivurl  \undefined \def \arxivurl#1{\textsf{#1}}\fi
\csname PreBibitemsHook\endcsname

\bibitem{ChenRapor}
\begin{barticle}
\bauthor{\bsnm{Chen}, \binits{B.Y.}}:
\batitle{A report on submanifolds of finite type}.
\bjtitle{Soochow J. Math.}
\bvolume{22},
\bfpage{117}--\blpage{337}
(\byear{1996})
\end{barticle}
\endbibitem

\bibitem{Chen-Morvan-Nore}
\begin{barticle}
\bauthor{\bsnm{Chen}, \binits{B.Y.}},
\bauthor{\bsnm{Morvan}, \binits{J.M.}},
\bauthor{\bsnm{Nore}, \binits{T.}}:
\batitle{Energy, tension and finite type maps}.
\bjtitle{Kodai Math. J.}
\bvolume{9},
\bfpage{406}--\blpage{418}
(\byear{1986})
\end{barticle}
\endbibitem

\bibitem{ChenKitap}
\begin{bbook}
\bauthor{\bsnm{Chen}, \binits{B.Y.}}:
\bbtitle{Total Mean Curvature and Submanifolds of Finite Type}.
\bpublisher{World Scientific},
\blocation{London}
(\byear{1984})
\end{bbook}
\endbibitem

\bibitem{ChenMakale1986}
\begin{barticle}
\bauthor{\bsnm{Chen}, \binits{B.Y.}}:
\batitle{Finite type pseudo-riemannian submanifolds}.
\bjtitle{Tamkang J. of Math}
\bvolume{17},
\bfpage{137}--\blpage{151}
(\byear{1986})
\end{barticle}
\endbibitem

\bibitem{Chen-Piccinni}
\begin{barticle}
\bauthor{\bsnm{Chen}, \binits{B.Y.}},
\bauthor{\bsnm{Piccini}, \binits{P.}}:
\batitle{Submanifolds with finite type gauss map}.
\bjtitle{Bull. Austral. Math. Soc.}
\bvolume{35},
\bfpage{161}--\blpage{186}
(\byear{1987})
\end{barticle}
\endbibitem

\bibitem{ONeillKitap}
\begin{bbook}
\bauthor{\bsnm{O'Neill}, \binits{M.P.}}:
\bbtitle{Semi-Riemannian Geometry with Applications to Relativity}.
\bpublisher{Academic Press},
\blocation{New Jersey}
(\byear{1983})
\end{bbook}
\endbibitem

\bibitem{NCTGenRelGrav}
\begin{botherref}
\oauthor{\bsnm{Turgay}, \binits{N.C.}}:
On the marginally trapped surfaces in 4-dimensional space-times with finite
  type Gauss map.
(accepted)
\end{botherref}
\endbibitem

\bibitem{ChenVeken2009Houston}
\begin{barticle}
\bauthor{\bsnm{Chen}, \binits{B.Y.}},
\bauthor{\bparticle{van~der} \bsnm{Veken}, \binits{J.}}:
\batitle{Classification of marginally trapped surfaces with parallel mean
  curvature vector in lorentzian space forms}.
\bjtitle{Houston J. Math.}
\bvolume{36},
\bfpage{421}--\blpage{449}
(\byear{2010})
\end{barticle}
\endbibitem

\bibitem{KKKM}
\begin{barticle}
\bauthor{\bsnm{Choi}, \binits{S.M.}},
\bauthor{\bsnm{Ki}, \binits{U.H.}},
\bauthor{\bsnm{Suh}, \binits{Y.J.}}:
\batitle{Classification of rotation surfaces in pseudo-euclidean space}.
\bjtitle{J. Korean Math. Soc.}
\bvolume{35},
\bfpage{315}--\blpage{330}
(\byear{1998})
\end{barticle}
\endbibitem

\end{thebibliography}
\end{document}